\newtheorem{theorem}{Theorem}[section]
\newtheorem{lemma}[theorem]{Lemma}
\newtheorem{proposition}[theorem]{Proposition}
\theoremstyle{definition}
\newtheorem{df}{Definition}
\newtheorem{example}[df]{Example}
\newcommand{\N}{\mathbb N}
\newcommand{\la}{\langle}
\newcommand{\ra}{\rangle}
\newcommand{\card}{\operatorname{card}}
\newcommand{\inte}{\operatorname{Int}}
\newcommand{\cl}{\operatorname{cl}}
\author{Micha{\l} Pop\l awski}
\address{Institute of Mathematics, \L \'od\'z University of Technology,
W\'olcza\'nska 215, 93-005 \L \'od\'z, Poland}
\email {michal.poplawski.m@gmail.com}
\title[Nontrivial convergent sequences]{The Baire category of the hyperspace \\ of nontrivial convergent sequences}
\date{}
\begin{document}
\begin{abstract}
Assume that $X$ is a regular space. We study topological properties of the family $S_c(X)$ of nontrivial convergent sequences in $X$ equipped with the Vietoris topology. In particular, we show that if $X$ has no isolated points, then $S_c(X)$ is a space of the first category which answers the question posed by S. Garcia-Ferreira and Y.F. Ortiz-Castillo.
\end{abstract}

\maketitle

\section{Introduction}

Let $X$ be a topological Hausdorff space. The Vietoris topology on the family $K(X)$ of all compact subsets of $X$ is generated by a base consisting of sets
\begin{equation} \la V_1,\ldots,V_n\ra:=\left\{K \in K(X) \colon K \subset \bigcup_{i=1}^n V_i \mbox{ and } K \cap V_i \neq \emptyset \mbox{ for } 1 \leq i \leq n \right\}, \label{bs} \end{equation}
where $n$ runs over $\N$ and $V_1,\ldots,V_n$ are nonempty open sets in $X$. 

Our notation is consistent with that used in \cite{GFOC}.
We say that $S \subset X$ is a nontrivial convergent sequence in $X$ if $S=\{x_n\}_{n \in \N} \cup \{\lim S\}$ for some injective sequence $(x_n)_{n \in \N}$ in $X$ which is convergent to some $\lim S \in X \setminus \{x_n\}_{n \in \N}.$ In other words, $S$ is a set of terms of an injective convergent sequence with its limit. Obviously, $S$ is compact for any space $X$, and $S_c(X)$ is empty for a discrete space $X$. Hence each closed subset $F$ of $S$ is compact. Consequently, the spaces $K(X)$, $CL(X)$, and the family of all closed subsets of $X$ with the topology generated by an analogous base, given by (\ref{bs}), introduce the same topology in their subspace $S_c(X)$.


In  general, separation axioms of the spaces $CL(X)$ and $K(X)$ depend on $X$.
More precisely, $CL(X)$ is normal if and only if $X$ is compact (see \cite{VL}), and $K(X)$ is metrizable if and only if $X$ is metrizable (\cite{MC}).

The main aim of this paper is to show that $S_c(X)$ is of first category in itself under the assumption that $X$ is regular and crowded (i.e. has no isolated points). This result gives a positive answer to  Question 3.4 in \cite{GFOC}. The authors of \cite{GFOC} asked whether $S_c(X)$ is of the first category in itself if $X$ is a metric
crowded space. From now on, sets of the first category will be called also meager.
In our considerations we will use the Banach Category Theorem (see \cite{O}) which states that in any topological space the union of of a family of open meager sets is meager, too. Thanks to this fact, it suffices to construct a meager open neighbourhood of any $S \in S_c(X)$. Such a construction is possible in the case of regular crowded space $X$. It happened that no precise assumptions on a space $X$ were formulated in some theorems and proofs in \cite{GFOC}. Hence we have decided to repeat or modify the respective arguments from \cite{GFOC}, with an explicit evidence of properties of $X$, which would be helpful 
to understand all details.

\section{Main result}
We will follow some ideas taken from the paper \cite{GFOC} while considering some specific subsets of $S_c(X)$. For given $k,m \in \N, 1 \leq i \leq m$ and pairwise disjoint nonempty closed sets $C_1,\ldots,C_k$, we denote
$$N_{k}^{i}(m,\{C_j \colon j \leq k\}):=\left\{S \in S_c(X) \colon S \subset \bigcup_{j=1}^k C_j \mbox{ and } 1 \leq \card(S \cap C_j) \leq m \mbox{ for all } l \leq k, \  l \neq i\right\}$$
and $N_k(m,\{C_j \colon j \leq k\}):=\bigcup_{i=1}^k N_{k}^{i}(m,\{C_j \colon j \leq k\}).$

It can be immediately seen that, if $S \in N_{k}^{i}(m,\{C_j \colon j \leq k\}),$ then $\card(S \cap C_i)=\omega$.

Here we present a fact which can be derived directly from \cite[Lemma 3.1]{GFOC}.

\begin{lemma} \label{nw}
Let $X$ be a crowded topological space. Then the set $N_{k}^{i}(m,\{C_j \colon j \leq k\})$ is nowhere dense, whenever $k,m \in \N, 1 \leq i \leq m$ and $C_1,\ldots,C_k$ are pairwise disjoint, nonempty and closed sets.
\end{lemma}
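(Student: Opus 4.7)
The plan is to verify nowhere density directly: given any nonempty basic Vietoris open set $\mathcal{U}=\langle V_1,\ldots,V_n\rangle\cap S_c(X)$, I will exhibit a nonempty open $\mathcal{U}'\subset\mathcal{U}$ disjoint from $N_k^i(m,\{C_j:j\leq k\})$, obtained by adjoining a few carefully chosen open sets to the list $V_1,\ldots,V_n$. The driving observation is that if $S\in N_k^i$ then $S\subset\bigcup_j C_j$, while each $S\cap C_l$ for $l\neq i$ is a nonempty finite set of cardinality at most $m$; in particular $S\cap C_i$ is infinite and $\lim S\in C_i$. I split into three cases according to how the sets $V_l$ meet the closed sets $C_j$.

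Case 1 is when some $V_{l_0}$ meets $X\setminus\bigcup_j C_j$. Since $\bigcup_j C_j$ is closed, I pick a nonempty open $W\subset V_{l_0}\setminus\bigcup_j C_j$ and set $\mathcal{U}':=\langle V_1,\ldots,V_n,W\rangle\cap S_c(X)$; this refines $\mathcal{U}$ and every $T$ in it meets $W$, so $T\not\subset\bigcup_j C_j$. Case 2 is when all $V_l\subset\bigcup_j C_j$ but some $V_{l_0}$ meets $C_{j_0}$ for a $j_0\neq i$. Fix $p\in V_{l_0}\cap C_{j_0}$; since $\bigcup_{j\neq j_0}C_j$ is closed and avoids $p$, I find an open $W$ with $p\in W\subset V_{l_0}$ and $W\cap\bigcup_{j\neq j_0}C_j=\emptyset$, which together with $W\subset\bigcup_j C_j$ forces $W\subset C_{j_0}$. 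Crowdedness yields $m+1$ distinct points inside $W$, and using the regularity of $X$ (the paper's standing hypothesis) I separate them by pairwise disjoint nonempty open sets $W_1,\ldots,W_{m+1}\subset W$. Then $\langle V_1,\ldots,V_n,W_1,\ldots,W_{m+1}\rangle\cap S_c(X)$ lies inside $\mathcal{U}$ (because every $W_s\subset V_{l_0}$) and each of its members has at least $m+1$ points in $C_{j_0}$, violating the bound $|T\cap C_{j_0}|\leq m$. Case 3 is the remaining case, in which every $V_l$ lies in $\bigcup_j C_j$ but misses each $C_j$ with $j\neq i$; disjointness forces $V_l\subset C_i$, so every $T\in\mathcal{U}$ satisfies $T\subset C_i$ and $T\cap C_l=\emptyset$ for all $l\neq i$, violating $|T\cap C_l|\geq 1$, and $\mathcal{U}'=\mathcal{U}$ already works.

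Nonemptiness of each refined Vietoris neighbourhood is routine: starting from any $S\in\mathcal{U}$ and adjoining one point from each newly introduced open set produces a nontrivial convergent sequence with the same limit as $S$ lying inside the enlarged base. The main delicate step is Case 2: ensuring the augmented basic open set is actually a subset of $\mathcal{U}$ forces every $W_s$ to sit inside a single $V_{l_0}$, and the construction of $m+1$ pairwise disjoint open neighbourhoods inside $W$ requires both crowdedness (for enough distinct points) and regularity (to separate them).
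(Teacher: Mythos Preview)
The paper does not actually give a proof of this lemma; it merely records that the fact ``can be derived directly from \cite[Lemma~3.1]{GFOC}''. So there is no in-paper argument to compare against, and your direct verification is a genuine addition rather than a reworking. The three-case split is the natural one and each case does what you claim: adjoining open sets contained in some $V_{l_0}$ keeps the refined basic set inside $\mathcal U$, and adding finitely many points to an $S\in\mathcal U$ witnesses nonemptiness.

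Two small corrections. First, regularity is \emph{not} a hypothesis of this lemma---the statement assumes only that $X$ is crowded, and the paper's standing assumption from the introduction is merely Hausdorff. That is enough: in a Hausdorff space any finite set of points can be separated by pairwise disjoint open neighbourhoods, which is all Case~2 needs. So replace ``regularity'' by ``Hausdorff'' there. Second, your Case~3 silently uses $k\geq 2$ (you need some index $l\neq i$ to obtain a violated inequality). For $k=1$ one has $N_1^1(m,\{C_1\})=\{S\in S_c(X):S\subset C_1\}$, which contains the open set $\langle \inte(C_1)\rangle$ whenever $\inte(C_1)\neq\emptyset$ and is then certainly not nowhere dense. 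This is a defect of the lemma's literal statement rather than of your argument, and the paper only ever invokes the case $k=2$.
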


Note that the assertion of the above fact can be lost if the set $N_k(m,\{C_j \colon j \leq k\})$ is considered as a subset of a closed subspace of $X$, which is not crowded.

\begin{example}
Consider sets $X:=[0,1], \ Y:=[0,1] \cup \{2\}, \ Z:= [0,1] \cup [2,3]$ with the Euclidean topology. Note that $X$ is a closed subspace of $Y$, $Y$ is a closed subspace of $Z$ and both spaces $X$ and $Z$ are crowded but $Y$ has one isolated point. Take $k:=2, \ m:=1, \ C_1:=[0,1], \ C_2:=\{2\}, \ i:=2$. By Lemma \ref{nw}, the set $N_{2}^{2}(1,\{C_1,C_2\})$ is nowhere dense in the space $S_c(Z)$. Nevertheless, this set is not nowhere dense in $S_c(Y)$. To check it, set $V_1:=[0,1], \ V_2:=\{2\}$, and consider the open set $\langle V_1,V_2\rangle$.  Obviously, each $S \in \langle V_1,V_2\rangle$ has exactly one point in $S \cap V_2$. Thus $S \in N_{2}^{2}(1,\{C_1,C_2\})$ and consequently, $\langle V_1,V_2\rangle \subset N_{2}^{2}(1,\{C_1,C_2\})$. Moreover, all sets $N_{k}^{i}(m,\{C_j \colon j \leq k\})$ as in Lemma \ref{nw} are nowhere dense in $S_c(X)$.
\end{example}

The next result (see \cite[Theorem 3.2]{GFOC}) is an application of the previous lemma.

\begin{lemma} \label{cat}
Suppose that $U_1,U_2$ are nonempty, closed and disjoint subsets of a crowded space $X$. Then $\langle\inte(U_1),\inte(U_2)\rangle$ is meager in $S_c(X)$.
\end{lemma}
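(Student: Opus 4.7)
The plan is to cover $\langle\inte(U_1),\inte(U_2)\rangle$ by countably many nowhere dense sets of the form $N_2^i(m,\{U_1,U_2\})$, so that Lemma \ref{nw} applied with $k=2$, $C_1:=U_1$, $C_2:=U_2$ does all the work. These two sets are pairwise disjoint, nonempty and closed, so the hypotheses of that lemma are in force (note that we may assume $\inte(U_1)$ and $\inte(U_2)$ are nonempty, otherwise $\langle\inte(U_1),\inte(U_2)\rangle$ is empty and the claim is trivial).

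Fix $S\in\langle\inte(U_1),\inte(U_2)\rangle$. First I would observe that $S\subset\inte(U_1)\cup\inte(U_2)\subset U_1\cup U_2$, and that $S\cap U_j\supset S\cap\inte(U_j)\neq\emptyset$ for $j=1,2$. Since $U_1\cap U_2=\emptyset$, the limit $\lim S$ lies in exactly one of them, say $\lim S\in U_i$. Then $X\setminus U_{3-i}$ is an open neighbourhood of $\lim S$, and since $S$ is a convergent sequence, only finitely many of its terms lie outside this neighbourhood; hence $S\cap U_{3-i}$ is finite.

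Consequently, if we set $m:=\card(S\cap U_{3-i})$, then $1\leq\card(S\cap U_{3-i})\leq m$, while no restriction is imposed on $\card(S\cap U_i)$, so $S\in N_2^i(m,\{U_1,U_2\})$. Letting $S$ range over the whole basic neighbourhood yields
$$\langle\inte(U_1),\inte(U_2)\rangle\subset\bigcup_{m\in\N}\bigl(N_2^1(m,\{U_1,U_2\})\cup N_2^2(m,\{U_1,U_2\})\bigr).$$
The right-hand side is a countable union of nowhere dense sets by Lemma \ref{nw}, hence meager in $S_c(X)$.

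The main obstacle is conceptual rather than computational: one has to recognize the dichotomy that any nontrivial convergent sequence contained in the disjoint union $U_1\cup U_2$ must concentrate cofinitely many of its terms on the side which contains its limit, forcing the intersection with the other side to be finite. Once this observation is made, the parameter $m$ is forced upon us and the reduction to Lemma \ref{nw} is automatic.
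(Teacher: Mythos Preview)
Your argument is correct and follows the same route as the paper: you show $\langle\inte(U_1),\inte(U_2)\rangle\subset\bigcup_{m\in\N}N_2(m,\{U_1,U_2\})$ and then invoke Lemma~\ref{nw}. The paper's proof is a one-line version of yours, merely asserting that this inclusion ``follows from the disjointness of closed sets $U_1,U_2$,'' whereas you have spelled out the cofinite-concentration argument that justifies it.
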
 
\begin{proof}
Thanks to Lemma \ref{nw} it suffices to observe that $\langle\inte(U_1),\inte(U_2)\rangle \subset \bigcup_{m \in \N} N_2(m,\{U_1,U_2\}).$ But this follows from the disjointness of closed sets $U_1,U_2$. 
\end{proof}

Now, we will construct the respective neighbourhoods of nontrivial convergent sequences.

\begin{lemma} \label{con}
Suppose $X$ is a Hausdorff space and $S=\{x_n\}_{n \in \N} \cup \{\lim S\} \in S_c(X)$. There are neighbourhoods $V_n, n \in \N$ of $x_{n}$'s and neighbourhood $V_S$ of $\lim S$ such that
$$V_1 \cap V_n=\emptyset \mbox{ for all n }\geq 2  \ \ \ \mbox{ and } V_1 \cap V_S=\emptyset.$$
\end{lemma}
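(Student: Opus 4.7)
The plan is to exploit the fact that only finitely many terms of the sequence are ``near'' $x_1$, so that the problem reduces to a finite number of Hausdorff separations. Concretely, I would first use the Hausdorff axiom to separate $x_1$ from $\lim S$: pick disjoint open sets $U \ni x_1$ and $V_S \ni \lim S$. Since $x_n \to \lim S$, convergence gives an index $N$ such that $x_n \in V_S$ for every $n \geq N$.

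Next I would handle the finitely many remaining indices $2 \leq n < N$ by again invoking the Hausdorff property: for each such $n$, the points $x_1$ and $x_n$ are distinct (the sequence is injective and $x_1 \neq \lim S$ is irrelevant here, but the points $x_1, x_n$ are distinct because the sequence is injective), so there are disjoint open sets $U_n \ni x_1$ and $V_n \ni x_n$. For $n \geq N$, I would simply set $V_n$ to be any open neighbourhood of $x_n$ contained in $V_S$ (for instance, any open neighbourhood of $x_n$ intersected with $V_S$, which is open since $x_n \in V_S$).

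Finally I would take
\[
V_1 := U \cap \bigcap_{n=2}^{N-1} U_n,
\]
a finite intersection of open neighbourhoods of $x_1$, hence itself an open neighbourhood of $x_1$. Checking the required disjointness is routine: $V_1 \cap V_S \subseteq U \cap V_S = \emptyset$; for $2 \leq n < N$ we have $V_1 \cap V_n \subseteq U_n \cap V_n = \emptyset$; and for $n \geq N$ we have $V_1 \cap V_n \subseteq V_1 \cap V_S = \emptyset$.

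The main conceptual point — and the only real obstacle — is that there are infinitely many indices $n \geq 2$, which on its face prevents a direct finite intersection argument. The trick is to observe that convergence to $\lim S$ already separates the tail of the sequence from $x_1$ through the single neighbourhood $V_S$, so only finitely many indices require individual Hausdorff separation; everything else is bookkeeping.
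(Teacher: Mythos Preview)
Your proof is correct and follows essentially the same approach as the paper: separate $x_1$ from $\lim S$ by Hausdorff, use convergence to reduce to finitely many remaining indices, separate those finitely many $x_n$ from $x_1$ individually, and intersect the resulting finitely many neighbourhoods of $x_1$. The only cosmetic difference is that the paper indexes the finite exceptional set as $M=\{m\geq 2: x_m\notin V_S\}$ rather than $\{2,\dots,N-1\}$, and sets $V_k:=V_S$ for the tail rather than an arbitrary subneighbourhood.
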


\begin{proof}
Use the Hausdorff axiom to find disjoint neighbourhoods $V_1'$ of $x_1$, and $V_S$ of $\lim S$. Since $(x_n)_{n \in \N}$ is convergent to $\lim S$, the set $M:=\{m \geq 2 \colon x_m \notin V_S\}$ is finite. Again, for each $m \in M$ use the Hausdorff axiom to find disjoint neighbourhoods $V_{1,m}'$ of $x_1$, and $V_m$ of $x_m$. The intersection $V_1=V_1' \cap \bigcap_{m \in M} V_{1,m}$ satisfies 
$$V_1 \cap V_m=\emptyset \mbox{ for each } m \in M.$$
Then it suffices to define $V_k:=V_S$ for all $k \notin M \cup \{1\}.$ 
\end{proof}

\begin{proposition} \label{pr}
Suppose $X$ is a regular space and $S=\{x_n\}_{n \in \N} \cup \{\lim S\} \in S_c(X).$ Then there are nonempty, closed and disjoint sets $U_1,U_2$ with $S \in \langle\inte(U_1),\inte(U_2)\rangle.$
\end{proposition}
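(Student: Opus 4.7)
The plan is to build $U_1$ as a small closed neighbourhood around $x_1$ alone, and $U_2$ as a closed neighbourhood that simultaneously captures $\lim S$ and all other $x_n$'s; the point is that convergence lets us cover all but finitely many $x_n$'s automatically by one neighbourhood of $\lim S$, so $U_2$ ends up as a \emph{finite} union of closed sets and is therefore closed.

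First I would apply Lemma~\ref{con} to obtain open neighbourhoods $V_n$ of $x_n$ ($n\in\N$) and $V_S$ of $\lim S$ with $V_1\cap V_n=\emptyset$ for $n\geq 2$ and $V_1\cap V_S=\emptyset$. Using regularity of $X$, I pick a closed neighbourhood $U_1$ of $x_1$ contained in $V_1$; this immediately gives $x_1\in\inte(U_1)$. Because $U_1\subset V_1$, the open sets $V_S$ and $V_n$ (for $n\geq 2$) all lie in the open set $X\setminus U_1$. So, again by regularity, I choose a closed neighbourhood $U_S$ of $\lim S$ with $U_S\subset X\setminus U_1$.

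Next, since the sequence $(x_n)$ converges to $\lim S$ and $\inte(U_S)$ is a neighbourhood of $\lim S$, the set $F:=\{n\geq 2 \colon x_n\notin \inte(U_S)\}$ is finite. For each $n\in F$, regularity applied to $x_n\in V_n\subset X\setminus U_1$ furnishes a closed neighbourhood $U_n$ of $x_n$ with $U_n\subset X\setminus U_1$. I then define
$$U_2:=U_S\cup\bigcup_{n\in F}U_n.$$
Being a finite union of closed sets, $U_2$ is closed; it is disjoint from $U_1$ by construction; and $\lim S\in \inte(U_S)\subset \inte(U_2)$, so $S\cap\inte(U_2)\neq\emptyset$.

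Finally I would verify $S\subset\inte(U_1)\cup\inte(U_2)$: $x_1\in\inte(U_1)$; for $n\geq 2$ with $n\in F$ we have $x_n\in\inte(U_n)\subset\inte(U_2)$; for $n\geq 2$ with $n\notin F$ we have $x_n\in\inte(U_S)\subset\inte(U_2)$; and $\lim S\in\inte(U_S)\subset\inte(U_2)$. This yields $S\in\langle\inte(U_1),\inte(U_2)\rangle$. The only point requiring care is ensuring $U_2$ is closed, and this is precisely where using convergence to reduce to the finite set $F$ is essential; otherwise one would face a countable union of closed neighbourhoods, which need not be closed in a merely regular space.
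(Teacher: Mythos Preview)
Your proof is correct and follows the same overall strategy as the paper: apply Lemma~\ref{con} to separate $x_1$ from the rest, use regularity to shrink $V_1$ to a closed neighbourhood $U_1$, and then produce a closed $U_2$ whose interior contains $\{\lim S\}\cup\{x_n:n\ge 2\}$. The only difference is in how $U_2$ is obtained. You invoke convergence a second time to cut down to a finite index set $F$ and take a finite union of closed neighbourhoods. The paper instead simply sets $U_2:=\cl\bigl(V_S\cup\bigcup_{n\ge 2}V_n\bigr)$: since $V_S\cup\bigcup_{n\ge 2}V_n$ is open and contained in the closed set $X\setminus V_1$, its closure is still disjoint from $V_1\supset U_1$, and being open it automatically lies in $\inte(U_2)$. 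Thus the finiteness reduction you describe as ``essential'' is in fact avoidable---taking the closure of the open union gives closedness for free. Both arguments work; the paper's is a little shorter.
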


\begin{proof}
Let $V_S$, $V_n$, $n\in\N$, be the respective neighbourhoods of $\lim S$, $x_n$, $n \in \N$ considered in Lemma \ref{con}. Since $V_1 \cap (V_S \cup \bigcup_{n \geq 2} V_n)=\emptyset$, we have $V_1 \cap \cl(V_S \cup \bigcup_{n \geq 2})=\emptyset$. Then we use the regularity of $X$ to find a neighbourhood $W_1$ of $x_1$ such that $\cl(W_1) \subset V_1$. Put $U_1:=\cl(W_1), \ U_2:=\cl(V_S \cup \bigcup_{n \geq 2} V_n).$ Obviously, these sets are nonempty, closed and disjoint. We need to show that $S \in \langle\inte(U_1),\inte(U_2)\rangle.$ Indeed, $x_1 \in W_1 \subset \inte(U_1)$ and $\{\lim S\} \cup \bigcup_{n \geq 2} \{x_n\} \subset V_S \cup \bigcup_{n \geq 2} V_n \subset \inte(U_2).$
\end{proof}

\begin{theorem}
Suppose that $X$ is a regular crowded space. Then the space $S_c(X)$ is of the first category in itself.
\end{theorem}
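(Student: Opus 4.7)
The proof is essentially a direct assembly of the three preceding results, so the plan is short.

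First, I would fix an arbitrary $S\in S_c(X)$. Proposition \ref{pr} applies (since $X$ is regular) and yields nonempty, closed, disjoint sets $U_1^S, U_2^S\subset X$ such that
$$S\in\langle\inte(U_1^S),\inte(U_2^S)\rangle.$$
By the very definition of the Vietoris topology, $\langle\inte(U_1^S),\inte(U_2^S)\rangle$ is an open subset of $S_c(X)$ containing $S$.

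Next, since $X$ is crowded, Lemma \ref{cat} guarantees that the open set $\langle\inte(U_1^S),\inte(U_2^S)\rangle$ is meager in $S_c(X)$. Doing this for every $S\in S_c(X)$ gives the covering
$$S_c(X)=\bigcup_{S\in S_c(X)}\langle\inte(U_1^S),\inte(U_2^S)\rangle,$$
which exhibits $S_c(X)$ as a union of open meager sets.

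Finally, I would invoke the Banach Category Theorem (cited in the introduction) to conclude that this union is itself meager, so $S_c(X)$ is of the first category in itself. No step here is really an obstacle: the whole difficulty has been absorbed into Lemma \ref{nw}, Lemma \ref{cat}, and Proposition \ref{pr}; the only thing worth checking carefully is that Proposition \ref{pr} can be applied to \emph{every} $S\in S_c(X)$ (it can, since the proposition has no hypothesis on $S$ beyond membership in $S_c(X)$), and that the resulting family of open meager neighbourhoods indeed covers $S_c(X)$ (it does, trivially, since each $S$ lies in its own neighbourhood).
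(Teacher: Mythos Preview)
Your proposal is correct and follows essentially the same approach as the paper: for each $S\in S_c(X)$ use Proposition~\ref{pr} to get an open Vietoris neighbourhood $\langle\inte(U_1),\inte(U_2)\rangle$, apply Lemma~\ref{cat} to see it is meager, and then invoke the Banach Category Theorem to conclude that $S_c(X)$ is meager as a union of open meager sets. The paper's proof is more terse, but the argument is identical.
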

\begin{proof}
Take $S \in S_c(X)$ and a neighbourhood of $S$ of the form $\langle\inte(U_1),\inte(U_2)\rangle$ considered in Proposition \ref{pr}. Then by Lemma \ref{cat}, this neighbourhood is meager. Therefore, by the Banach Category Theorem, $S_c(X)$ is of the first category as a union of meager neighbourhoods of its points.
\end{proof}

\end{document}